\documentclass[a4paper]{amsart}
\usepackage[utf8]{inputenc}
\usepackage{caption}
\usepackage{subcaption}
\usepackage{pgfplots}
\pgfplotsset{
	compat=newest,
}
\usepgfplotslibrary{groupplots}
\usepackage{amsmath,amssymb,amsfonts, amsthm}
\usepackage{nicefrac}
\usepackage{bbm}
\newcommand{\R}{\mathbb{R}}
\newcommand{\KuznetsovSpace}{\ensuremath{\mathcal{K}}}
\newcommand{\Lone}{\ensuremath{L^1(\R)}}
\newcommand{\Z}{\mathbb{Z}}
\newcommand{\N}{\mathbb{N}}
\newcommand{\Ph}{\R}

\newcommand{\D}{\R}

\newcommand{\dd}{\ensuremath{\,d}}
\newcommand{\Dx}{{\ensuremath{\Delta x}}}

\newcommand{\Dt}{{\ensuremath{\Delta t}}}

\newcommand{\hf}{\ensuremath{\nicefrac{1}{2}}}

\newcommand{\iphf}{{i+\hf}}
\newcommand{\imhf}{{i-\hf}}
\newcommand{\nphf}{{n+\hf}}
\usepackage{subcaption}

\DeclareMathOperator{\Lip}{Lip}
\DeclareMathOperator{\TV}{TV}
\DeclareMathOperator{\BV}{BV}
\DeclareMathOperator{\sign}{sign}
\DeclareMathOperator{\supp}{supp}

\newcommand{\cell}{{\mathcal{C}}}
\newcommand{\LipNorm}[1]{|#1|_{\Lip^+}}
\newcommand{\DLipNorm}[1]{|#1|_{\mathrm{DLip}^+}}
\renewcommand{\phi}{\varphi}
\renewcommand{\leq}{\leqslant}
\renewcommand{\geq}{\geqslant}
\renewcommand{\epsilon}{\varepsilon}

\newcommand{\ind}{\mathbbm{1}}

\usepackage{todonotes}

\newtheorem{theorem}{Theorem}
\newtheorem{lemma}[theorem]{Lemma}
\theoremstyle{definition}

\usepackage{cleveref}

\newlength\figureheight
\newlength\figurewidth


\usepackage{scalerel}[2014/03/10]
\usepackage[usestackEOL]{stackengine}
\def\intavg{\,\ThisStyle{\ensurestackMath{%
  \stackinset{c}{0\LMpt}{c}{0\LMpt}{\SavedStyle-}{\SavedStyle\phantom{\int}}}%
  \setbox0=\hbox{$\SavedStyle\int\,$}\kern-\wd0}\int}

\usepackage{booktabs}
\usepackage{algorithmicx}
\usepackage{algpseudocode}
\usepackage{graphicx}

\usepackage{etoolbox}

\newtoggle{usetikz}
\toggletrue{usetikz}
\togglefalse{usetikz}
\newcommand{\InputImage}[3]{}
\iftoggle{usetikz}{%
	\renewcommand{\InputImage}[3]{%
		\begin{center}%
		\setlength\figureheight{#2}%
		\setlength\figurewidth{#1}%
		\input{experiments/img_tikz/#3_notitle.xyz}%
		\end{center}%
	}%
} { %
	\renewcommand{\InputImage}[3]{%
		\begin{center}%
		\includegraphics[width=#1]{img/#3_notitle}%
		\end{center}%
	}%
}


\pgfplotsset{every tick label/.append style={font=\tiny}}
\pgfplotsset{
	tick label style = {font = {\fontsize{6 pt}{7 pt}\selectfont}},
	label style = {font = {\fontsize{6 pt}{7 pt}\selectfont}},
	legend style = {font = {\fontsize{6 pt}{7 pt}\selectfont}},  
}

\usepackage{scalefnt}


\usepackage{url}
\usepackage[foot]{amsaddr}
\usepackage{geometry}
\usepackage{mathtools}

\author[U. S. Fjordholm]{Ulrik Skre Fjordholm}
\address{Department of Mathematics, University of Oslo, Postboks 1053 Blindern, 0316 Oslo, Norway}
\email{ulriksf@math.uio.no}
\author[K. O. Lye]{Kjetil Olsen Lye}
\address{Mathematics and Cybernetics, SINTEF, Pb. 124 Blindern, 0314 Oslo, Norway}
\email{kjetil.olsen.lye@sintef.no}

\title[Convergence rates with unbounded TV]{Convergence rates of monotone schemes for conservation laws for data with unbounded total variation}

\begin{document}
\maketitle
\begin{abstract}
We prove convergence rates of monotone schemes for conservation laws for H\"older continuous initial data with unbounded total variation, provided that the H\"older exponent of the initial data is greater than $\hf$. For strictly $\Lip^+$ stable monotone schemes, we prove convergence for any positive H\"older exponent. Numerical experiments are presented which verify the theory.
\end{abstract}

\section{Introduction}
Consider the scalar hyperbolic conservation law
\begin{equation}\label{eq:conservation_law}
\begin{split}
u_t+f(u)_x=0 \\ 
u(x,0)=u_0(x)
\end{split}
\end{equation}
where $f\in C^1(\R)$ is the flux function and $u_0\in L^1\cap L^\infty(\R)$ is the initial data. Equations of this form appear in a large number of applications, including scenarios where very irregular data is to be expected; we mention in particular flow in porous media \cite{Coclite2014,Krotkiewski2011} and turbulent flows (so-called ``Burgulence'') \cite{sinai,fris1}. While the study of qualitative properties of ``rough'' solutions of \eqref{eq:conservation_law} has been explored in detail (see e.g.~\cite{sinai,fris1}), the behavior of numerical methods for \eqref{eq:conservation_law} has received much less attention.

The purpose of this paper is to study the convergence rate of monotone numerical methods for \eqref{eq:conservation_law} in the presence of rough (say, piecewise H\"older continuous) initial data $u_0$. As is to be expected, the convergence rate deteriorates with lower regularity. We demonstrate in several numerical experiments that our estimates are sharp, or close to being sharp.

\subsection{Weak solutions of hyperbolic conservation laws}
As is well-known, solutions of nonlinear hyperbolic equations \eqref{eq:conservation_law} can develop shocks in finite time, making it necessary to interpret the equation in a weak manner. A weak solution of \eqref{eq:conservation_law} is a function $u\in L^1\cap L^{\infty}(\R\times \R_+)$ satisfying
\begin{equation}
\int_{\R_+}\int_{\R}u(x,t)\phi_x(x,t)+f(u(x,t))\phi(x,t)\,dx\,dt+\int_{\R}\phi(x,0)u_0(x)\,dx=0
\end{equation}
for all test functions $\phi\in C_c^\infty(\R\times \R_+)$. It is well-known that weak solutions are non-unique, so one introduces \emph{entropy conditions} to single out the physically relevant solutions. Concretely, we say that $u\in L^1\cap L^{\infty}(\R\times\R_+)$ is an entropy solution of \eqref{eq:conservation_law}, if for every pair of functions $\eta, q\colon\R\to\R$ where $\eta$ is convex and $q'=\eta'f'$, it holds that
\[\eta(u)_t+q(u)_x\leq 0\]
in the sense of distributions. In particular, it is sufficient to impose the entropy condition with respect to the Kruzkov entropy pairs, given by
\[
\eta(u,k)=|u-k|, \qquad q(u,k)=\sign(u-k)(f(u)-f(k)),\qquad u\in \R
\]
for all $k\in\R$. It was shown by Kruzkov (see e.g.~\cite{Dafermos} or \cite[Proposition 2.10]{front_tracking_risebro}) that entropy solutions of \eqref{eq:conservation_law} are unique.

\subsection{Finite volume methods for conservation laws}\label{sec:fvm}
This section briefly describes the conventional approach of numerical approximation of conservation laws through finite volume and finite difference methods. For a complete review, one can consult e.g.~\cite{leveque_green}.

We discretize the spatial domain $\R$ by partitioning it into a collection of cells $\cell_i \coloneqq [x_{i-\hf}, x_{i+\hf})\subset \D$
with corresponding cell midpoints $x_{i}\coloneqq\frac{x_{i+\hf}+x_{i-\hf}}{2}.$ For simplicity we assume that our mesh is equidistant, that is,
\[x_{i+\hf} - x_{i-\hf} \equiv \Dx \qquad \forall\ i\in\Z\]
for some $\Dx>0$. We discretize time by equidistant points, that is, we choose $t^n = n\Dt$ for $n\in\N_0$ for some $\Dt>0$.

For each cell $\cell_i$ and each point in time $t^n$ we let $v^{n}_i$ be an approximation of the cell average of $u$ at time $t^n$, $u_i^n \approx \intavg_{\cell_i}u(x,t^n)\dd x$ (here, $\intavg_C\coloneqq\frac{1}{|C|}\int_{C}$, where $|C|$ is the Lebesgue measure of a Lebesgue set $C\subset\R$). This approximation is computed according to the finite volume scheme
\begin{equation}\label{eq:fvs}
\begin{split}
\frac{v^{n+1}_i-v^n_i}{\Dt}+ \frac{F(v^n_{i}, v^n_{i+1})-F(v^n_{i-1}, v^n_{i})}{\Dx} = 0  \\
v^{0}_{i} = \intavg_{\cell_i} u_0(x)\dd x
\end{split}
\end{equation}
where $F$ is a \emph{numerical flux function}. We furthermore assume the numerical flux function is consistent with $f$ and locally Lipschitz continuous; more precisely, for every bounded set $K\subset \R$, there exists a constant $C_F>0$ such that
\begin{equation}\label{eq:FLipschitz}
\big|F(a,b)-f(a)\big|+\big|F(a,b)-f(b)\big|\leq C_F\big|b-a\big| \qquad \forall\ a,b\in K.
\end{equation}
We will frequently abuse notation and view grid functions $v\in \ell^1(\Z)$ as an element of $L^1(\R)$ under the inclusion $\ell^1(\Z)\hookrightarrow L^1(\R)$ which maps $v\mapsto \sum_i v_i \ind_{\cell_i}$.

\section{A modified Kuznetsov lemma}
Kuznetsov's lemma \cite{KUZNETSOV1976105} provides an explicit estimate of the difference between two (approximate) solutions of \eqref{eq:conservation_law} in terms of their relative (Kruzkov) entropy. In this section we recall Kuznetsov's lemma and prove a corollary which --- as opposed to Kuznetsov's original application of the lemma --- does not depend on $\TV(u_0)$ being bounded.

Fix now some final time $T>0$. Kuznetsov's lemma estimates approximation errors in the space
\[
\KuznetsovSpace\coloneqq\left\{u\colon\R_+\to \Lone \ \big|\ u\text{ and has right and left limits at all $(t,x)\in\R_+\times\R$}\right\}.
\]
For $u\in\KuznetsovSpace$ and $\sigma>0$ we define the moduli of continuity 
\[
\nu_t(u,\sigma)=\sup\left\{\|u(t+\tau)-u(t)\|_{\Lone}\mid 0<\tau\leq \sigma\right\}, \qquad
\nu(u,\sigma)=\sup_{t\in[0,T]}\nu_t(u, \sigma).
\]
Let $\omega\in C_c^\infty(\D)$ be a standard mollifier, i.e.~an even function satisfying $\supp \omega\subset [-1,1]$, $0\leq\omega\leq 1$ and $\int_{\D}\omega\dd x = 1.$ For $\epsilon>0$ we define $\omega_\epsilon(x)=\frac{1}{\epsilon}\omega(\frac{x}{\epsilon})$. For $\epsilon,\epsilon_0>0$, define
\[\Omega(x,x',s,s')=\omega_{\epsilon_0}(s-s')\omega_{\epsilon}(x-x')\qquad (x,x',s,s')\in\D^4.\]
For $\phi\in C_c^\infty(\D\times \R_+,\Ph)$, $k\in\R$ and $u,v\in\KuznetsovSpace$ we set
\begin{align*}
\Lambda_T(u,\phi, k)=&\int_0^T\int_{\D}\big(|u-k|\phi_t+q(u,k)\phi_x\big)\dd x \dd t \\
& -\int_{\D}|u(x,T)-k|\phi(x,T)\dd x+\int_{\D}|u(x,0)-k|\phi(x,0)\dd x,\\
\Lambda_{\epsilon,\epsilon_0}(u,v)=&\int_0^T\int_{\D}\Lambda_T(u, \Omega(\cdot, x', \cdot, s'), v(x',s'))\dd x'\dd s'.
\end{align*}

\begin{lemma}[Kuznetsov's lemma \cite{KUZNETSOV1976105}]\label{lemma:kuznetsov}
Let $v\in\KuznetsovSpace$ and let $w$ be an entropy solution of \eqref{eq:conservation_law}. If $0<\epsilon_0<T$ and $\epsilon>0$, then
\begin{align*}
\|v(\cdot,T)-w(\cdot,T)\|_{\Lone} &\leq \|v_0-w_0\|_{\Lone} +\TV(w_0)\big(2\epsilon + \epsilon_0\|f\|_{\Lip}\big) \\ &\quad+\nu(v,\epsilon_0)-\Lambda_{\epsilon, \epsilon_0}(v,w)
\end{align*}
where $v_0=v(\cdot, 0)$ and $w_0=w(\cdot,0)$.
\end{lemma}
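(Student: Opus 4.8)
The plan is to run Kruzkov's doubling-of-variables argument, exploiting the symmetry of the Kruzkov entropy pair together with the evenness of the kernel $\Omega$. The only place where the admissibility of $w$ enters is the entropy inequality itself: since $w$ is an entropy solution with data $w_0$, one has $\Lambda_T(w,\phi,k)\geq 0$ for every nonnegative $\phi\in C_c^\infty(\D\times\R_+)$ and every $k\in\R$. Choosing the nonnegative test function $\phi=\Omega(\cdot,x',\cdot,s')$ and the constant $k=v(x',s')$, and then integrating over $(x',s')\in\D\times[0,T]$, I obtain $\Lambda_{\epsilon,\epsilon_0}(w,v)\geq 0$. Everything else in the proof is an identity or a regularity estimate that does not see the equation.

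The heart of the argument is the \emph{symmetrization}. I would form the sum $\Lambda_{\epsilon,\epsilon_0}(v,w)+\Lambda_{\epsilon,\epsilon_0}(w,v)$ and show that its interior space-time double integrals cancel exactly. After relabelling $(x,s)\leftrightarrow(x',s')$ in the second functional, the cancellation rests on three facts: $|u-k|$ is symmetric in $(u,k)$; the flux $q(u,k)=\sign(u-k)(f(u)-f(k))$ is also symmetric; and, because $\omega$ is even, $\partial_s\Omega=-\partial_{s'}\Omega$ and $\partial_x\Omega=-\partial_{x'}\Omega$. Consequently only the temporal boundary contributions at $t=0$ and $t=T$ survive; call their total $\mathcal B$. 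Combined with $\Lambda_{\epsilon,\epsilon_0}(w,v)\geq 0$ this yields $\Lambda_{\epsilon,\epsilon_0}(v,w)\leq\mathcal B$, so it remains to prove that $\|v(\cdot,T)-w(\cdot,T)\|_{\Lone}+\mathcal B$ is dominated by the right-hand side of the asserted inequality.

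It then remains to estimate the four boundary integrals comprising $\mathcal B$. In each of them one factor is frozen in time at an endpoint (the genuine boundary trace) while the other carries a mollified time argument. I would first integrate out the temporal kernel and replace that mollified time by the endpoint value; the resulting errors are governed by the temporal moduli of continuity. For the entropy solution $w$ this modulus is controlled through the equation, $\nu(w,\epsilon_0)\leq\epsilon_0\|f\|_{\Lip}\TV(w_0)$, whereas for the merely-$\KuznetsovSpace$ function $v$ no such control is available and the error is retained as $\nu(v,\epsilon_0)$. I would then remove the spatial mollifier $\omega_\epsilon$ by transporting $w$ from $x'$ to $x$, at a cost of $\epsilon\,\TV(w(\cdot,t))\leq\epsilon\,\TV(w_0)$ at each of $t=0$ and $t=T$ by the total-variation diminishing property of $w$; this accounts for the coefficient $2\epsilon$. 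What survives after these replacements is precisely $\|v_0-w_0\|_{\Lone}$ from the $t=0$ layer and $-\|v(\cdot,T)-w(\cdot,T)\|_{\Lone}$ from the $t=T$ layer, the latter cancelling the norm we added.

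The delicate point, and the one I expect to be the main obstacle, is the bookkeeping of the temporal boundary layers. Because the time integral runs only over $[0,T]$ while the kernel $\omega_{\epsilon_0}$ is centred at an endpoint, each boundary integral captures only half of the temporal mass; the hypothesis $\epsilon_0<T$ guarantees that the layers at $t=0$ and $t=T$ do not interact, and the evenness of $\omega$ fixes this mass at exactly $\tfrac12$. The two functionals then contribute complementary halves at each endpoint, which recombine into the full norms $\|v_0-w_0\|_{\Lone}$ and $\|v(\cdot,T)-w(\cdot,T)\|_{\Lone}$; simultaneously the halved temporal errors from $v$ at $t=0$ and $t=T$ sum to a single $\nu(v,\epsilon_0)$, and likewise those from $w$ sum to $\epsilon_0\|f\|_{\Lip}\TV(w_0)$. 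Getting these factors of $\tfrac12$ and the attendant sign conventions right is the part that demands the most care; once done, assembling the boundary estimates gives the stated bound.
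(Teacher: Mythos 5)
The paper states this lemma without proof, simply citing Kuznetsov's original article, and your sketch reproduces precisely the standard argument from that literature (see also Holden--Risebro): positivity of $\Lambda_{\epsilon,\epsilon_0}(w,v)$ from the entropy inequality, exact cancellation of the interior doubled integrals in the symmetrized sum via the symmetry of the Kruzkov pair and of the kernel, and the boundary-layer bookkeeping with the factors of $\tfrac12$ under the hypothesis $\epsilon_0<T$. Your accounting of the error terms ($2\epsilon\,\TV(w_0)$ from spatial mollification at the two endpoints, $\nu(w,\epsilon_0)\leq\epsilon_0\|f\|_{\Lip}\TV(w_0)$ and $\nu(v,\epsilon_0)$ from the temporal layers, recombining with the halved norms into $\|v_0-w_0\|_{\Lone}$ and $-\|v(\cdot,T)-w(\cdot,T)\|_{\Lone}$) is correct, so the proposal is sound and takes essentially the same route as the proof the paper defers to.
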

The following is a straightforward extension of~\cite[Lemma 4 and Theorem 4]{KUZNETSOV1976105}.

\begin{lemma}\label{lem:generalconv}
Let $u_0\in L^1(\D)\cap L^\infty(\D)$ and let $v^\Dx$ be the solution computed by a monotone finite volume scheme \eqref{eq:fvs} with initial data $v^\Dx_0$. Then
\begin{equation}\label{eq:kuznetsovestimate}
\begin{split}
\|u(T)-v^{\Dx}(T)\|_{L^1(\R)} &\leq 2\|u_0-v^\Dx_0\|_{\Lone} + \TV(v^\Dx_0)\big(2\epsilon + \epsilon_0\|f\|_{\Lip} + 2C_F\max(\epsilon_0,\Dt)\big) \\
&\quad +  C\left(\frac{C_F\Dx}{\epsilon} + \frac{\|f\|_{\Lip}\Dt}{\epsilon_0}\right)\sum_{n=0}^N\TV(v^\Dx(t^n))\Dt.
\end{split}
\end{equation}
for any $T>0$, $\epsilon>0$ and $0<\epsilon_0<T$, for some $C>0$ only depending on the choice of smoothing kernel $\omega$.
\end{lemma}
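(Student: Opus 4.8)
The plan is to apply Kuznetsov's lemma (\Cref{lemma:kuznetsov}) to $v^\Dx$, but against a reference entropy solution whose initial datum has \emph{finite} total variation, so that $\TV(u_0)$ (which may be infinite) never enters. Let $\tilde u$ be the entropy solution of \eqref{eq:conservation_law} with initial datum $v^\Dx_0$; since $v^\Dx_0$ is piecewise constant on the mesh we have $\TV(v^\Dx_0)<\infty$, so $\tilde u\in\BV$ and is an admissible choice for $w$ in \Cref{lemma:kuznetsov}. By the triangle inequality,
\[
\|u(T)-v^\Dx(T)\|_{\Lone}\leq \|u(T)-\tilde u(T)\|_{\Lone}+\|\tilde u(T)-v^\Dx(T)\|_{\Lone},
\]
and the $L^1$-contraction of entropy solutions bounds the first term by $\|u_0-v^\Dx_0\|_{\Lone}$. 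For the second term I would invoke \Cref{lemma:kuznetsov} with $v=v^\Dx$ and $w=\tilde u$. Since $v^\Dx$ and $\tilde u$ share the initial datum $v^\Dx_0$, the term $\|v_0-w_0\|_{\Lone}$ vanishes while $\TV(w_0)=\TV(v^\Dx_0)$, which already accounts for the smoothing contribution $\TV(v^\Dx_0)\big(2\epsilon+\epsilon_0\|f\|_{\Lip}\big)$ in \eqref{eq:kuznetsovestimate}.

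It then remains to control the two scheme-dependent quantities $\nu(v^\Dx,\epsilon_0)$ and $-\Lambda_{\epsilon,\epsilon_0}(v^\Dx,\tilde u)$. For the temporal modulus of continuity I would use that a monotone scheme is $L^\infty$-stable and TVD, so $\TV(v^\Dx(t^n))\leq\TV(v^\Dx_0)$ for all $n$, together with the one-step estimate obtained directly from \eqref{eq:fvs} and the flux consistency \eqref{eq:FLipschitz},
\[
\|v^\Dx(t^{n+1})-v^\Dx(t^n)\|_{\Lone}=\Dt\sum_i\big|F(v^n_i,v^n_{i+1})-F(v^n_{i-1},v^n_i)\big|\leq 2C_F\Dt\,\TV(v^\Dx(t^n)).
\]
A shift $0<\tau\leq\epsilon_0$ crosses at most $\lceil\epsilon_0/\Dt\rceil$ time levels, so summing the one-step estimate and again invoking the TVD bound yields $\nu(v^\Dx,\epsilon_0)\leq 2C_F\max(\epsilon_0,\Dt)\TV(v^\Dx_0)$, which is exactly the remaining term on the second line of \eqref{eq:kuznetsovestimate}.

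The crux is the entropy defect $-\Lambda_{\epsilon,\epsilon_0}(v^\Dx,\tilde u)$, and this is where monotonicity is used decisively. The key input is the discrete Kruzkov entropy inequality satisfied by any monotone scheme: there is a numerical entropy flux $Q$, consistent with $q(\cdot,k)$ and controlled through \eqref{eq:FLipschitz}, with
\[
|v^{n+1}_i-k|-|v^n_i-k|+\frac{\Dt}{\Dx}\big(Q^n_{\iphf}-Q^n_{\imhf}\big)\leq 0\qquad\forall\ i\in\Z,\ k\in\R.
\]
I would multiply this by $\Omega(\cdot,x',\cdot,s')\,\Dx$, sum over $i$ and $n$, and integrate in $(x',s')$, obtaining a discrete surrogate for $\Lambda_T(v^\Dx,\Omega(\cdot,x',\cdot,s'),\tilde u(x',s'))$; comparing this surrogate with the continuous functional by summation by parts, every discrepancy is a consistency error in which one grid increment $|v^n_{i+1}-v^n_i|$ (or one time increment) is paired against a difference quotient of the mollifier. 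Since $|\partial_x\omega_\epsilon|\sim\epsilon^{-1}$ and $|\partial_s\omega_{\epsilon_0}|\sim\epsilon_0^{-1}$, and \eqref{eq:FLipschitz} converts numerical-flux differences into $C_F$ times solution increments, these errors collapse to $\tfrac{C_F\Dx}{\epsilon}\TV(v^\Dx(t^n))$ and $\tfrac{\|f\|_{\Lip}\Dt}{\epsilon_0}\TV(v^\Dx(t^n))$ at each time level, and summing in $n$ produces the last line of \eqref{eq:kuznetsovestimate}. The main obstacle is precisely this bookkeeping: the double sum must be organised so that no term degrades below first order in $\Dx$ or $\Dt$, the boundary-in-time contributions of the telescoping sum must be matched against the terminal and initial integrals of $\Lambda_T$ without reintroducing any dependence on $\TV(u_0)$, and the residual initial-layer mismatch, together with the contraction step above, is what upgrades the single projection error into the factor $2\|u_0-v^\Dx_0\|_{\Lone}$.
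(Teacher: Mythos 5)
Your proposal follows essentially the same route as the paper's own proof: the same decomposition via the auxiliary entropy solution with initial datum $v^\Dx_0$ combined with $L^1$-contraction, the same application of Kuznetsov's lemma (exploiting $\|v_0-w_0\|_{\Lone}=0$ and $\TV(w_0)=\TV(v^\Dx_0)$), the same one-step plus TVD bound for $\nu(v^\Dx,\epsilon_0)$, and the same treatment of the entropy defect via the Crandall--Majda discrete entropy inequality, summation by parts, the Lipschitz bound on the numerical entropy flux, and the mollifier-derivative scalings $\epsilon^{-1}$, $\epsilon_0^{-1}$. Apart from harmless constant bookkeeping (your counting of crossed time levels gives $4C_F\max(\epsilon_0,\Dt)$ rather than $2C_F\max(\epsilon_0,\Dt)$, and the factor $2$ in front of $\|u_0-v^\Dx_0\|_{\Lone}$ is pure slack in both arguments, not something that needs an ``initial-layer'' explanation), it is the paper's proof.
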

\begin{proof}
	Let $w$ be the entropy solution of \eqref{eq:conservation_law} with $w_0=v^\Dx_0$. Then 
	\begin{align*}
	\|u(T)-v^\Dx(T)\|_{\Lone} &\leq \|u(T)-w(T)\|_{\Lone} + \|w(T)-v^\Dx(T)\|_{\Lone} \\
	&\leq \|u_0-v^\Dx_0\|_{\Lone} + \|w(T)-v^\Dx(T)\|_{\Lone}
	\end{align*}
	by the stability of entropy solutions in $\Lone$ (see e.g.~\cite[Theorem 1]{Kru70} or \cite[Proposition 2.10]{front_tracking_risebro}). We estimate the second term using \Cref{lemma:kuznetsov}. For notational convenience, denote
    \[\eta_i^n=|v^n_i-k| \qquad \text{and} \qquad q^n_i=q(v^n_i,k).\]
    Without loss of generality we may assume that $T=t^{N+1}$ for some $N\in\N$. Then
\begin{align*}
-\Lambda_T(v^\Dx,\phi, k) &=-\sum_{n=0}^N\sum_{i=-\infty}^\infty\int_{t^n}^{t^{n+1}}\int_{x_{i-\hf}}^{x_{i+\hf}}(\eta^n_i\phi_t+q^n_i\phi_x)\dd x \dd t \\
&\quad+\sum_{i=-\infty}^\infty\int_{x_{i-\hf}}^{x_{i+\hf}}\eta^{N+1}_i\phi(x,T)\dd x - \sum_{i=-\infty}^\infty\int_{x_{i-\hf}}^{x_{i+\hf}}\eta^0_i\phi(x,0)\dd x\\
&= -\sum_{n=0}^N\sum_{i=-\infty}^\infty\int_{x_{i-\hf}}^{x_{i+\hf}}\eta^n_i\left(\phi(x,t^{n+1})-\phi(x,t^{n})\right)\dd x\\
    &\quad+\sum_{n=0}^N\sum_{i=-\infty}^\infty\int_{t^n}^{t^{n+1}}q^n_i\left(\phi(x_{i+\hf},t)-\phi(x_{i-\hf},t)\right) \dd t \\
    &\quad+\sum_{i=-\infty}^\infty\int_{x_{i-\hf}}^{x_{i+\hf}}\eta^{N+1}_i\phi(x,T)\dd x - \sum_{i=-\infty}^\infty\int_{x_{i-\hf}}^{x_{i+\hf}}\eta^0_i\phi(x,0)\dd x \\
\intertext{(summation by parts)}
    &= \sum_{n=0}^N\sum_{i=-\infty}^\infty\int_{x_{i-\hf}}^{x_{i+\hf}}\left(\eta^{n+1}_i-\eta^{n}_i\right)\phi(x,t^{n+1})\dd x\\
    &\quad+\Dx\sum_{n=0}^N\sum_{i=-\infty}^\infty\int_{t^n}^{t^{n+1}}\left(q^n_{i+1}-q^n_i\right)\phi(x_{i+\hf},t)\dd t \\
\intertext{(set $\bar{\phi}_i^n\coloneqq\frac{1}{\Dx}\int_{x_{i-\hf}}^{x_{i+\hf}}\phi(x,t^{n+1})\dd x$ and $\bar{\phi}_\iphf^\nphf \coloneqq \frac{1}{\Dt}\int_{t^n}^{t^{n+1}}\phi(x_{i+\hf},t)\dd t$)}
    &= \Dx\sum_{n=0}^N\sum_{i=-\infty}^\infty\left(\eta^{n+1}_i-\eta^{n}_i\right)\bar{\phi}_i^n +\Dt\sum_{n=0}^N\sum_{i=-\infty}^\infty\left(q^n_{i+1}-q^n_i\right)\bar{\phi}_\iphf^\nphf.
\end{align*}
Let $Q^n_\iphf\coloneqq F\big(v^n_i\vee k, v^n_{i+1}\vee k\big) - F\big(v^n_i\wedge k, v^n_{i+1}\wedge k\big)$ be the Crandall--Majda numerical entropy flux, so that
\[\eta^{n+1}_i-\eta^n_i+\frac{\Dt}{\Dx} \big(Q^n_\iphf-Q^n_\imhf\big)\leq 0\]
(see e.g.~\cite{CM80} or \cite[(3.33)]{front_tracking_risebro}). It is not hard to show that $Q$ is Lipschitz continuous,
\begin{equation}\label{eq:QLipschitz}
|Q_\iphf^n-q_i^n| \leq 2C_F|v_{i+1}^n-v_i^n|,
\end{equation}
where $C_F$ is the Lipschitz constant for $F$ (cf.~\eqref{eq:FLipschitz}). Assuming now that $\phi$ is non-negative, we obtain from the above discrete entropy inequality
\begin{align*}
    -\Lambda_T(v^\Dx,\phi, k)&\leq -\Dt\sum_{n=0}^N\sum_{i=-\infty}^\infty\big(Q^n_\iphf-Q^n_\imhf\big)\bar{\phi}_i^{n+1} +\Dt\sum_{n=0}^N\sum_{i=-\infty}^\infty\left(q^n_{i+1}-q^n_i\right)\bar{\phi}_\iphf^\nphf \\
    &= \Dt\sum_{n=0}^N\sum_{i=-\infty}^\infty\big( Q^n_\iphf-q^n_i\big)\big(\bar{\phi}_{i+1}^{n+1}-\bar{\phi}_i^{n+1}\big)\\
    &\quad+\Dt\sum_{n=0}^N\sum_{i=-\infty}^\infty\left(q^n_{i+1}-q^n_{i}\right)\big(\bar{\phi}_\iphf^\nphf - \bar{\phi}_{i+1}^{n+1}\big) \\
\intertext{(using \eqref{eq:QLipschitz} and the Lipschitz continuity $\|q\|_{\Lip}\leq\|f\|_{\Lip}$)}
    &\leq \Dt\sum_{n=0}^N\sum_{i=-\infty}^\infty |v_{i+1}^n-v_i^n|\Big(C_F\big|\bar{\phi}_{i+1}^{n+1}-\bar{\phi}_i^{n+1}\big| + \|f\|_{\Lip}\big|\bar{\phi}_\iphf^\nphf - \bar{\phi}_{i+1}^{n+1}\big|\Big) \\
\intertext{(smoothness of $\phi$)}
	&\leq \Dt\big(C_F\Dx\|\partial_x\phi\|_{L^\infty} + \|f\|_{\Lip}\Dt\|\partial_t\phi\|_{L^\infty}\big) \sum_{n=0}^N\sum_{i=-\infty}^\infty |v_{i+1}^n-v_i^n|.
\end{align*}
From this estimate we obtain
\begin{align*}
&-\Lambda_{\epsilon_0, \epsilon}(v^\Dx,w) = -\int_0^T\int_{\R} \Lambda_T(v^\Dx, \omega_{\epsilon_0}(\cdot-s)\omega_{\epsilon}(\cdot-y), w)\dd y \dd s \\
&\leq \Dt\int_0^T\int_{\R}\big(C_F\Dx\|\omega_{\epsilon_0}(\cdot-s)\|_{L^\infty}\|\omega_{\epsilon}'(\cdot-y)\|_{L^\infty} + \|f\|_{\Lip}\Dt\|\omega_{\epsilon_0}'(\cdot-s)\|_{L^\infty}\|\omega_{\epsilon}(\cdot-y)\|_{L^\infty}\big) \\
&\mathop{\hphantom{\leq \Dt\int_0^T\int_{\R}}} \times\sum_{n=0}^N\sum_{i=-\infty}^\infty |v_{i+1}^n-v_i^n| \dd y \dd s \\
&\leq C\Dt\left(\frac{C_F\Dx}{\epsilon} + \frac{\|f\|_{\Lip}\Dt}{\epsilon_0}\right)\sum_{n=0}^N\sum_{i=-\infty}^\infty |v_{i+1}^n-v_i^n|
\end{align*}
for some constant $C>0$ only depending on $\omega$. 

It remains to estimate $\nu(v^\Dx,\epsilon_0)$. The standard estimate
\[
|v_i^{n+1}-v_i^n| \leq \frac{\Dt}{\Dx}C_F\big(|v_{i+1}^n-v_i^n| + |v_i^n-v_{i-1}^n|\big)
\]
yields
\begin{align*}
\nu(v^\Dx,\epsilon_0) &\leq \Dx\sum_i \max(\epsilon_0,\Dt)\frac{1}{\Dx}C_F\big(|v_{i+1}^n-v_i^n| + |v_i^n-v_{i-1}^n|\big) = 2C_F\max(\epsilon_0,\Dt)\TV(v^n).
\end{align*}
\end{proof}

\section{Convergence rates for irregular data}\label{sec:convergence_rates}

With the Kuznetsov lemma and its corollary in place, we are now in place to prove convergence rates for \eqref{eq:fvs} with irregular data. We start with some preliminaries in Section \ref{sec:prelim} before proving convergence rates in Section \ref{sec:convergence_rates}.

\subsection{Preliminaries}\label{sec:prelim}
We define the \emph{discrete Lip$^+$ (semi-)norm} as the sublinear functional
\[
\DLipNorm{v}\coloneqq\sup_{i\in\Z}\frac{v_{i+1}-v_i}{\Dx} \qquad \text{for } v\in\ell^\infty(\R).
\]
Following \cite{NT92} (see also \cite{FjoSol16}), we say that a numerical flux function is \emph{(strictly) Lip$^+$ stable} if 
\begin{equation}\label{eq:lipplusdef}
\DLipNorm{v^{n+1}}\leq \frac{1}{\DLipNorm{v^n}^{-1}+\beta\Dt}
\end{equation}
for some $\beta\geq0$ ($\beta>0$, respectively) which is independent of $\Dt,\Dx$. Iterating \eqref{eq:lipplusdef}, it holds in particular that
\[
\DLipNorm{v^{n}} \leq \frac{1}{\DLipNorm{v^0}^{-1}+\beta t^n} \qquad \forall\ n\in\N.
\]
It was shown in \cite{NT92} that the Lax--Friedrichs, Engquist--Osher and Godunov schemes are all strictly $\Lip^+$ stable. (The Roe scheme is non-strictly $\Lip^+$ stable.) The concept of $\Lip^+$ stability is motivated by the Oleinik entropy condition for conservation laws with strictly convex flux functions \cite{Ole57}, which states that the $\Lip^+$ seminorm $\LipNorm{u}\coloneqq\sup_{x\neq y} \frac{u(x)-u(y)}{x-y}$ of a solution of \eqref{eq:conservation_law} should decrease over time at a rate proportional to $t^{-1}$; more precisely,
\[
\LipNorm{u(t)} \leq \frac{1}{\LipNorm{u_0}^{-1} + \beta_0 t}
\]
where $0\leq\beta_0\leq f''(v)$ for all $v\in\R$.

For a function $g\in L^1(\R)$ we define its total variation as 
\[
\TV(g) = \sup\left\{\int_{\R} g(x)\phi'(x)\,dx\mid \phi\in C_c^1(\R), \|\phi\|_{L^\infty}\leq 1\right\}.
\]
We say that a finite volume scheme is \emph{total variation diminishing} (TVD) if for every $u_0\in \BV(\R)$, we have $\TV(v^{n+1})\leq \TV(v^{n})$ for all $n\geq 0$. We say that the scheme is \emph{monotone} if for all cell averaged initial data $u^0,v^0$ with $u^0_j\leq v^0_j$ for all $j\in \Z$, we have $u^n_{j}\leq u^n_{j}$ for all $n>0$ and $j\in\Z$.

\begin{lemma}\label{lem:approximation_holder}
Let $u\in C^\alpha_c(\R)$ for some $\alpha>0$ and let $u^{\Dx}\in \ell^1(\Z)$ be the volume averages of $u$,
\[u^{\Dx}_i=\intavg_{\cell_i}u(x)\,dx\qquad i\in\Z.\]
Then
\begin{subequations}
\begin{align}
\|u^{\Dx}-u\|_{L^1(\R)} &\leq C\Dx^\alpha,	\label{eq:l1projectionerror} \\
\TV(u^\Dx) &\leq C\Dx^{\alpha-1},	\label{eq:tvprojectionerror}
\end{align}
\end{subequations}
where $C$ only depends on $\alpha$ and the size of the support of $u$.
\end{lemma}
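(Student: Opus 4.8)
The plan is to reduce both bounds to the single pointwise estimate coming from H\"older continuity, namely $|u(x)-u(y)|\le [u]_\alpha|x-y|^\alpha$ where $[u]_\alpha\coloneqq\sup_{x\neq y}|u(x)-u(y)|/|x-y|^\alpha$, applied at the scale of one cell, and then to use the compact support of $u$ to keep the relevant sums finite. (If $\alpha>1$ then compact support forces $u\equiv 0$ and both estimates are trivial, so I take $0<\alpha\le 1$.) Throughout I view $u^\Dx$ as the piecewise constant function $\sum_i u^\Dx_i\ind_{\cell_i}$, and I abbreviate by $N_\Dx\coloneqq\#\{i\in\Z : \cell_i\cap\supp u\neq\emptyset\}$ the number of cells meeting the support; since the mesh is equidistant, $N_\Dx\le \diam(\supp u)/\Dx + 2$.

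For \eqref{eq:l1projectionerror} I would first work on a single cell $\cell_i$. Writing the cell average as $u^\Dx_i=\frac{1}{\Dx}\int_{\cell_i}u(y)\dd y$, I get $u^\Dx_i-u(x)=\frac{1}{\Dx}\int_{\cell_i}(u(y)-u(x))\dd y$ for $x\in\cell_i$; since $|x-y|\le\Dx$ for $x,y\in\cell_i$, the H\"older bound gives $|u^\Dx_i-u(x)|\le [u]_\alpha\Dx^\alpha$ uniformly on the cell. Integrating over $\cell_i$ yields $\int_{\cell_i}|u^\Dx_i-u(x)|\dd x\le [u]_\alpha\Dx^{\alpha+1}$. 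Summing over $i$ and noting that both $u$ and $u^\Dx$ vanish on every cell disjoint from $\supp u$, only the $N_\Dx$ cells near the support contribute, so $\|u^\Dx-u\|_{L^1(\R)}\le N_\Dx[u]_\alpha\Dx^{\alpha+1}\le C\Dx^\alpha$, with $C$ depending only on $\alpha$, $[u]_\alpha$ and $\diam(\supp u)$ (assuming $\Dx$ bounded so the additive term in $N_\Dx$ is absorbed).

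For \eqref{eq:tvprojectionerror} I recall that under the inclusion $\ell^1(\Z)\hookrightarrow L^1(\R)$ one has $\TV(u^\Dx)=\sum_i|u^\Dx_{i+1}-u^\Dx_i|$. Translating $\cell_{i+1}=\cell_i+\Dx$, I write $u^\Dx_{i+1}-u^\Dx_i=\frac{1}{\Dx}\int_{\cell_i}(u(x+\Dx)-u(x))\dd x$, and the H\"older bound with increment $\Dx$ gives $|u^\Dx_{i+1}-u^\Dx_i|\le[u]_\alpha\Dx^\alpha$. Only the $O(N_\Dx)$ consecutive differences for which $\cell_i$ or $\cell_{i+1}$ meets $\supp u$ are nonzero, so $\TV(u^\Dx)\le C\,N_\Dx[u]_\alpha\Dx^\alpha\le C\Dx^{\alpha-1}$, again with $C$ depending only on $\alpha$, $[u]_\alpha$ and $\diam(\supp u)$.

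The per-cell computations are entirely elementary and the only point needing care is the bookkeeping of the two sums: it is precisely the observation that compact support makes them finite with $N_\Dx=\bigO(1/\Dx)$ that turns the per-cell bounds $\Dx^{\alpha+1}$ and $\Dx^\alpha$ into the claimed global rates $\Dx^\alpha$ and $\Dx^{\alpha-1}$. I therefore expect no genuine obstacle beyond stating the estimate for $\Dx$ in a bounded range and being honest that $C$ also carries the H\"older seminorm $[u]_\alpha$ in addition to $\alpha$ and the support size.
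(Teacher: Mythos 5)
Your proof is correct and takes essentially the same route as the paper's: cell-by-cell H\"older estimates giving $\Dx^{\alpha+1}$ per cell in $L^1$ and $\Dx^\alpha$ per consecutive difference for $\TV$ (via the same translation identity $u^\Dx_{i+1}-u^\Dx_i=\intavg_{\cell_i}\big(u(x+\Dx)-u(x)\big)\dd x$), summed over the $\bigO(1/\Dx)$ cells meeting $\supp u$. Your added remarks --- that the case $\alpha>1$ is trivial and that the constant honestly carries the H\"older seminorm $[u]_\alpha$, not just $\alpha$ and the support size --- are accurate refinements consistent with the constant $C=\|u\|_{C^\alpha}\Dx|K|$ appearing in the paper's own proof.
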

\begin{proof}
Let $K=\{i\in\Z\mid \cell_i\cap \supp u\neq \emptyset\}$. Then
\begin{align*}
	\|u^{\Dx}-u\|_{L^1(\R)}&=\sum_{i\in K}\int_{\cell_i}|u^{\Dx}(x)-u(x)|\,dx
	=\sum_{i\in K}\int_{\cell_i}\left|\intavg_{\cell_i}u(y)-u(x)\,dy\right|\,dx\\
	&\leq\sum_{i\in K}\int_{\cell_i}\intavg_{\cell_i}|u(y)-u(x)|\,dy\,dx
	\leq \|u\|_{C^\alpha}\sum_{i\in K}\int_{\cell_i}\intavg_{\cell_i}|x-y|^{\alpha}\,dy\,dx\\
	&\leq \|u\|_{C^\alpha}\sum_{i\in K}\int_{\cell_i}\Dx^{\alpha} dx
	\leq \|u\|_{C^\alpha}\sum_{i\in K}\Dx^{\alpha+1}\\
	&=C \Dx^{\alpha}
\end{align*}
where $C=\|u\|_{C^\alpha}\Dx|K|$ and $|K|$ is the Lebesgue measure of $K$. Similarly,
\begin{align*}
\TV(u^\Dx) = \sum_{i\in K}\left|\intavg_{\cell_i}u(x+\Dx)-u(x)\,dx\right| \leq \sum_{i\in K} \|u\|_{C^\alpha}\Dx^\alpha = C\Dx^{\alpha-1}
\end{align*}
for the same constant $C$ as above.
\end{proof}

\subsection{Convergence rates}\label{sec:convrate}
Without any assumptions on scheme beyond being monotone, we can only prove convergence rates for initial data whose H\"older exponent is not smaller than $1/2$, which the following theorem makes precise.
\begin{theorem}\label{thm:monotone_theorem} 
For a flux function $f\in C^1(\R)$, let $u\colon\R\times[0,T]\to\R$ be the entropy solution of \eqref{eq:conservation_law} with initial data $u_0\in C_c^\alpha(\R)$ for some $\alpha\in(0,1)$. Let $(v_i^n)_{i,n}$ be generated by a monotone finite volume scheme \eqref{eq:fvs} with initial data $u_0$. Then
\begin{equation}
\|u(T)-v^{\Dx}(T)\|_{L^1(\R)} \leq C\Dx^{\alpha-1/2}
\end{equation}
for any $T>0$, for some $C>0$ only depending on $f$ and $u_0$.
\end{theorem}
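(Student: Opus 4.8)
The plan is to combine the modified Kuznetsov estimate \eqref{eq:kuznetsovestimate} with the approximation bounds of \Cref{lem:approximation_holder}, and then optimize over the two regularization parameters $\epsilon,\epsilon_0$. Since the scheme is initialized with the volume averages $v_0^\Dx = u_0^\Dx$, \Cref{lem:approximation_holder} immediately supplies the two ingredients we need about the initial data: $\|u_0 - v_0^\Dx\|_{\Lone} \leq C\Dx^\alpha$ from \eqref{eq:l1projectionerror}, and $\TV(v_0^\Dx) \leq C\Dx^{\alpha-1}$ from \eqref{eq:tvprojectionerror}, where $C$ depends only on $\alpha$ and $\supp u_0$.

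Next I would control the running total variation appearing in the last term of \eqref{eq:kuznetsovestimate}. A monotone scheme is total variation diminishing, so $\TV(v^\Dx(t^n)) \leq \TV(v_0^\Dx) \leq C\Dx^{\alpha-1}$ for every $n$, and since $\sum_{n=0}^N \Dt = T$ this gives $\sum_{n=0}^N \TV(v^\Dx(t^n))\Dt \leq CT\Dx^{\alpha-1}$. Substituting these three bounds into \eqref{eq:kuznetsovestimate}, and imposing the CFL condition $\Dt \leq \lambda\Dx$ (needed for monotonicity anyway) so that $\Dt = \bigO(\Dx)$, reduces the right-hand side to a sum of terms of the shape $\Dx^\alpha$, $\Dx^{\alpha-1}\epsilon$, $\Dx^{\alpha-1}\epsilon_0$, $\Dx^{\alpha-1}\max(\epsilon_0,\Dx)$, $\Dx^\alpha/\epsilon$ and $\Dx^\alpha/\epsilon_0$, with constants depending only on $f$, $u_0$ and $T$.

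The decisive step is the choice of regularization parameters. The mollification terms linear in $\epsilon$, weighted by the blown-up total variation $\Dx^{\alpha-1}$, compete with the spatial consistency term $\Dx^\alpha/\epsilon$; balancing $\Dx^{\alpha-1}\epsilon$ against $\Dx^\alpha/\epsilon$ forces $\epsilon = \Dx^{1/2}$, and the identical balancing in the temporal variable forces $\epsilon_0 = \Dx^{1/2}$. With this choice every listed term is $\bigO(\Dx^{\alpha-1/2})$: indeed $\Dx^{\alpha-1}\epsilon = \Dx^{\alpha-1/2} = \Dx^\alpha/\epsilon$, while $\max(\epsilon_0,\Dx) = \Dx^{1/2}$ for $\Dx$ small, and $\Dx^\alpha \leq \Dx^{\alpha-1/2}$ since $\Dx < 1$. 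One also checks that $\epsilon_0 = \Dx^{1/2} < T$ for $\Dx$ sufficiently small, as required by \Cref{lem:generalconv}. Summing then yields $\|u(T)-v^\Dx(T)\|_{\Lone} \leq C\Dx^{\alpha-1/2}$.

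I expect the only genuine subtlety to be this parameter optimization: the $\sqrt{\Dx}$ scaling is exactly what transfers the factor $\Dx^{-1}$ hidden in $\TV(v_0^\Dx) \leq C\Dx^{\alpha-1}$ into a loss of half a power of $\Dx$, and it is precisely this loss that makes the bound informative only when $\alpha > \hf$. Everything else is bookkeeping, namely verifying that the volume-average initialization and the TVD property feed cleanly into \eqref{eq:kuznetsovestimate}, and that the local Lipschitz constants $\|f\|_{\Lip}$ and $C_F$ may be taken on the invariant range $[-\|u_0\|_{L^\infty}, \|u_0\|_{L^\infty}]$ guaranteed by the maximum principle for monotone schemes.
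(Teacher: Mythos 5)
Your proposal is correct and follows essentially the same route as the paper's own proof: the projection bounds of \Cref{lem:approximation_holder}, the TVD property of monotone schemes to control $\sum_n \TV(v^\Dx(t^n))\Dt$ by $CT\Dx^{\alpha-1}$, substitution into the Kuznetsov estimate \eqref{eq:kuznetsovestimate}, and the balancing choice $\epsilon=\epsilon_0=\Dx^{1/2}$. Your additional remarks (the CFL condition giving $\Dt=\bigO(\Dx)$, the requirement $\epsilon_0<T$, and taking the Lipschitz constants on the invariant range) are details the paper leaves implicit, and they only strengthen the argument.
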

\begin{proof}
Since $u_0$ is H\"older continuous with exponent $\alpha$, \Cref{lem:approximation_holder} implies that $\TV(v^\Dx_0)\leq C\Dx^{\alpha-1}<\infty$, and since the scheme is TVD we get $\TV(v^{\Dx}(t^n))\leq  \TV(v^\Dx_0)$. Hence,
\[
\sum_{n=0}^N\TV(v^\Dx(t^n))\Dt\leq CT\Dx^{\alpha-1}.
\]
We note furthermore that
\[\|u_0-v^\Dx_0\|_{\Lone}\leq C\Delta x^{\alpha}.\]
Combining the above estimates with that of Lemma \ref{lem:generalconv}, we see that
\begin{equation}
\begin{split}
\|u(T)-v^{\Dx}(T)\|_{L^1(\R)} &\leq 2C\Delta x^{\alpha} + C\Dx^{\alpha-1}\big(2\epsilon + \epsilon_0\|f\|_{\Lip} + 2C_F\max(\epsilon_0,\Dt)\big) \\
&\quad +  CT\left(\frac{C_F\Dx}{\epsilon} + \frac{\|f\|_{\Lip}\Dt}{\epsilon_0}\right)\Dx^{\alpha-1}.
\end{split}
\end{equation}
Choosing $\epsilon=\epsilon_0=\Dx^{1/2}$ yields
\begin{equation}
\|u(T)-v^{\Dx}(T)\|_{L^1(\R)} \leq 2C_1\Delta x^{\alpha} + C_2\Dx^{\alpha-1/2} \leq C\Dx^{\alpha-1/2},
\end{equation}
which was what we wanted.
\end{proof}

We can improve the somewhat suboptimal convergence rate of $\Dx^{\alpha-1/2}$ for $\Lip^+$-stable schemes. To this end we need the following result.

\begin{lemma}\label{lem:lipplus}
Let $u_0\in L^1\cap L^\infty(\R)$ have compact support and let $u$ be the entropy solution of \eqref{eq:conservation_law}. Let $(v_i^n)_{i,n}$ be generated by a strictly $\Lip^+$ stable finite volume scheme \eqref{eq:fvs}. 
	Then 
	\[
	\sum_{n=0}^N \TV(v^n)\Dt \leq 
	C\left(\DLipNorm{v^0}\Dt + \frac{1}{\beta}\log \big(1+\beta t^N\DLipNorm{v^0}\big)\right)
	\]
	where $t^N\leq T$ and $C>0$ is independent of $\Dx$.
\end{lemma}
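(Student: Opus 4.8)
The plan is to bound $\TV(v^n)$ by the discrete $\Lip^+$ seminorm $\DLipNorm{v^n}$ with a constant independent of $\Dx$, and then exploit the decay of $\DLipNorm{v^n}$ guaranteed by strict $\Lip^+$ stability. Concretely, I would first establish the geometric estimate
\[
\TV(v^n) \leq C\,\DLipNorm{v^n},
\]
where $C$ depends only on the size of the support of the numerical solution, and then sum this against the explicit decay bound from \eqref{eq:lipplusdef}, converting the resulting sum into an integral to produce the logarithm.

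For the geometric estimate, the key observation is that for a compactly supported grid function the positive and negative variations coincide. Writing $x^+=\max(x,0)$ and using that $\sum_i (v^n_{i+1}-v^n_i)$ telescopes to zero, one gets
\[
\TV(v^n) = \sum_i |v^n_{i+1}-v^n_i| = 2\sum_i (v^n_{i+1}-v^n_i)^+.
\]
Each positive jump obeys $(v^n_{i+1}-v^n_i)^+\leq \Dx\,\DLipNorm{v^n}$ (note $\DLipNorm{v^n}\geq 0$ whenever $v^n\not\equiv 0$, since a nonzero compactly supported profile must increase somewhere). It then remains to count the cells contributing to the sum. Here I would invoke finite speed of propagation: because the scheme is a three-point monotone scheme satisfying the usual CFL condition, its numerical domain of dependence spreads at most one cell per time step, so $\supp v^n$ stays inside a fixed bounded interval whose length is controlled by $\supp u_0$, $T$ and the mesh ratio $\Dt/\Dx$, independently of $\Dx$. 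Hence the number of cells meeting the support is $\bigO(\Dx^{-1})$, and multiplying by the per-cell bound $\Dx\,\DLipNorm{v^n}$ yields $\TV(v^n)\leq C\,\DLipNorm{v^n}$ with $C$ independent of $\Dx$.

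With this in hand I would combine it with the iterated strict $\Lip^+$ bound, which gives $\DLipNorm{v^n}\leq \DLipNorm{v^0}\big(1+\beta t^n\DLipNorm{v^0}\big)^{-1}$. Summing,
\[
\sum_{n=0}^N \TV(v^n)\,\Dt \leq C\sum_{n=0}^N \frac{\DLipNorm{v^0}}{1+\beta t^n\DLipNorm{v^0}}\,\Dt.
\]
I would then isolate the $n=0$ term (contributing $C\,\DLipNorm{v^0}\,\Dt$) and bound the remaining terms by a Riemann-sum comparison: since $t\mapsto \DLipNorm{v^0}/(1+\beta t\DLipNorm{v^0})$ is decreasing, the right-endpoint sum over $n=1,\dots,N$ is dominated by $\int_0^{t^N}\DLipNorm{v^0}/(1+\beta t\DLipNorm{v^0})\,dt = \beta^{-1}\log\big(1+\beta t^N\DLipNorm{v^0}\big)$. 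Putting the two contributions together gives exactly the claimed bound. The main obstacle is the geometric estimate $\TV(v^n)\leq C\,\DLipNorm{v^n}$, and specifically making the support bound uniform in $\Dx$ via finite propagation speed; once that is secured, the decay estimate and the sum-to-integral step are routine.
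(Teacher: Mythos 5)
Your proposal is correct and follows essentially the same route as the paper: the identity $\TV(v^n)=2\sum_i(v^n_{i+1}-v^n_i)^+$ via telescoping, the per-jump bound $\Dx\,\DLipNorm{v^n}$ combined with a support bound uniform in $\Dx$, the iterated strict $\Lip^+$ decay, and the sum-to-integral comparison with the $n=0$ term isolated. The only difference is cosmetic: the paper simply posits the uniform bound $\supp v^{\Dx}(t)\subset[-M,M]$, whereas you justify it explicitly by finite speed of propagation.
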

\begin{proof}
	Let $M>0$ be such that $\supp v^\Dx(t)\in[-M,M]$ for all $t\in[0,T]$ and let $I\in\N$ be such that $x_{I-1}<M\leq x_I$. The compact support of $v$ and the strict $\Lip^+$ stability imply that
	\begin{align*}
		\TV(v^n) &= \sum_{i=-I}^{I} |v_{i+1}^n-v_i^n| = 2\sum_{i=-I}^I \big(v_{i+1}^n-v_i^n\big)^+ \\
		&\leq \sum_{i=-I}^{I} \frac{1}{\DLipNorm{v^0}^{-1}+\beta t^n}\Dx \leq 2M\frac{1}{\DLipNorm{v^0}^{-1}+\beta t^n}.
	\end{align*}
	Hence,
	\begin{equation}
	\label{eq:tv_not_so_sharp}
	\begin{aligned}
		\sum_{n=0}^N \TV(v^n)\Dt &\leq 2M\sum_{n=0}^N\frac{1}{\DLipNorm{v^0}^{-1}+\beta t^n} \Dt \\
		&\leq 2M\left(\frac{\Dt}{\DLipNorm{v^0}^{-1}} + \frac{1}{\beta}\left(\log \big(\DLipNorm{v^0}^{-1}+\beta t^N\big) - \log\big(\DLipNorm{v^0}^{-1}\big)\right)\right) \\
		&= 2M\left(\DLipNorm{v^0}\Dt + \frac{1}{\beta}\log \big(1+\beta t^N\DLipNorm{v^0}\big)\right).\qedhere
		\end{aligned}
	\end{equation}
\end{proof}

\begin{theorem}	\label{thm:maintheorem}
	For a strictly convex flux function $f\in C^1(\R)$, let $u\colon\R\times[0,T]\to\R$ be the entropy solution of \eqref{eq:conservation_law} with initial data $u_0\in C_c^\alpha(\R)$ for some $\alpha\in(0,1)$. Let $(v_i^n)_{i,n}$ be generated by a strictly $\Lip^+$ stable, monotone finite volume scheme \eqref{eq:fvs} with initial data $u_0$. Then
	\begin{equation}\label{eq:optimalconvrate}
	\|u(T)-v^{\Dx}(T)\|_{L^1(\R)} \leq C_{L,M,f,\beta}\sqrt{\log\big(1+C_F\beta T\Dx^{\alpha-1}\big)}\Dx^{\alpha/2},
	\end{equation}
for any $T>0$, for some $C_{L,M,f,\beta}>0$. For small $\Dx>0$ this yields the ``almost $\Dx^{\alpha/2}$'' estimate
\begin{equation}\label{eq:optimalasymptoticconvrate}
	\|u(T)-v^{\Dx}(T)\|_{L^1(\R)} \leq C\Dx^{\alpha/2}\sqrt{-\log\Dx}.
\end{equation}
\end{theorem}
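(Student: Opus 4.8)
The plan is to combine the modified Kuznetsov estimate of \Cref{lem:generalconv} with the sharper bound on the time-integrated total variation furnished by \Cref{lem:lipplus}, exactly as \Cref{thm:monotone_theorem} combined \Cref{lem:generalconv} with the crude TVD bound. The key improvement is that for a strictly $\Lip^+$ stable scheme the sum $\sum_{n=0}^N\TV(v^n)\Dt$ grows only \emph{logarithmically} in the initial total variation rather than linearly in $T$, and this is what upgrades the exponent from $\alpha-1/2$ to (almost) $\alpha/2$.

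First I would record the data estimates from \Cref{lem:approximation_holder}: since $u_0\in C_c^\alpha(\R)$ we have $\|u_0-v^\Dx_0\|_{\Lone}\leq C\Dx^\alpha$ and, crucially, $\DLipNorm{v^0}\leq \TV(v^\Dx_0)/\Dx\cdot(\text{const})\leq C\Dx^{\alpha-1}$, since the one-sided difference quotient $(v_{i+1}^0-v_i^0)/\Dx$ is controlled by the same H\"older estimate that gives $\TV(v^\Dx_0)\leq C\Dx^{\alpha-1}$. Feeding $\DLipNorm{v^0}\leq C\Dx^{\alpha-1}$ into \Cref{lem:lipplus} yields
\begin{equation}\label{eq:tvsum_holder}
\sum_{n=0}^N\TV(v^n)\Dt \leq C\left(\Dx^{\alpha-1}\Dt + \frac{1}{\beta}\log\big(1+C\beta t^N\Dx^{\alpha-1}\big)\right).
\end{equation}
Under the usual CFL condition $\Dt\leq C\Dx$ the first term is $\bigO(\Dx^\alpha)$ and is absorbed, so the dominant contribution to the time-integrated variation is the logarithmic factor $\beta^{-1}\log(1+C\beta T\Dx^{\alpha-1})$.

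Next I would substitute into \eqref{eq:kuznetsovestimate}. With $\TV(v^\Dx_0)\leq C\Dx^{\alpha-1}$ and the bound \eqref{eq:tvsum_holder}, the right-hand side becomes, up to constants,
\begin{equation}\label{eq:assembled}
\|u(T)-v^\Dx(T)\|_{\Lone} \leq C\Dx^\alpha + C\Dx^{\alpha-1}(\epsilon+\epsilon_0) + C\left(\frac{\Dx}{\epsilon}+\frac{\Dt}{\epsilon_0}\right)L(\Dx),
\end{equation}
where $L(\Dx)\coloneqq\log(1+C\beta T\Dx^{\alpha-1})$ abbreviates the logarithmic factor. The optimization of the free parameters is the one genuinely new step. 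Balancing the term $\Dx^{\alpha-1}\epsilon$ against $\Dx L(\Dx)/\epsilon$ suggests $\epsilon\sim \Dx^{1-\alpha/2}\sqrt{L(\Dx)}$, and likewise for $\epsilon_0$ (using $\Dt\leq C\Dx$); both the second and third groups in \eqref{eq:assembled} then collapse to order $\Dx^{\alpha/2}\sqrt{L(\Dx)}$, which dominates the $\Dx^\alpha$ term. This reproduces the claimed bound \eqref{eq:optimalconvrate}, and the asymptotic form \eqref{eq:optimalasymptoticconvrate} follows because $L(\Dx)=\log(1+C\beta T\Dx^{\alpha-1}) \sim (1-\alpha)(-\log\Dx)$ as $\Dx\to 0$, so $\sqrt{L(\Dx)}\leq C\sqrt{-\log\Dx}$.

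The main obstacle is twofold and lies entirely in the bookkeeping of the optimization rather than in any deep new idea. One must verify that the $\epsilon$-dependent choices remain admissible for \Cref{lemma:kuznetsov}, i.e.\ that $0<\epsilon_0<T$ for all small $\Dx$ (which holds since $\epsilon_0\to 0$), and one must carefully track how the logarithmic factor $L(\Dx)$ propagates through the square-root balancing so that it appears as $\sqrt{L(\Dx)}$ and not $L(\Dx)$ in the final rate. The second delicate point is the justification of the one-sided bound $\DLipNorm{v^0}\leq C\Dx^{\alpha-1}$; unlike the two-sided $\TV$ estimate this is a pointwise supremum over $i$, but it follows from the same H\"older modulus of continuity of $u_0$ applied to each cell-average difference, so no strict convexity of $f$ is needed at this stage — the convexity enters only through the availability of strict $\Lip^+$ stability, which is assumed.
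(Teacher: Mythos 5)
Your proposal is correct and follows essentially the same route as the paper: it combines the Kuznetsov estimate of \Cref{lem:generalconv} with \Cref{lem:lipplus} (seeded by the H\"older bound $\DLipNorm{v^0}\leq \|u_0\|_{C^\alpha}\Dx^{\alpha-1}$, which the paper also uses implicitly), and optimizes with the same choice $\epsilon=\epsilon_0=\Dx^{1-\alpha/2}\sqrt{L(\Dx)}$, concluding with the same logarithmic asymptotics. The only blemish is the intermediate chain $\DLipNorm{v^0}\leq \mathrm{const}\cdot\TV(v^\Dx_0)/\Dx$, which would only give $\Dx^{\alpha-2}$; your subsequent justification via the H\"older modulus applied to each cell-average difference is the correct (and sufficient) argument, so nothing is actually missing.
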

\begin{proof}
	\Cref{lem:lipplus} and \Cref{lem:approximation_holder} imply
	\[\sum_{n=0}^N \TV(v^n)\Dt \leq C\left(\|u_0\|_{C^\alpha}\Dx^\alpha + \frac{1}{\beta}\log \big(1+\|u_0\|_{C^\alpha}\beta t^N\Dx^{\alpha-1}\big)\right)\]
	for some $C>0$ independent of $\Dx$. Inserting this and the bounds $\|u_0-v^\Dx_0\|_{\Lone}\leq C\Dx^\alpha$ and $\TV(v^\Dx_0)\leq C\Dx^{\alpha-1}$ from \Cref{lem:approximation_holder} into the Kuznetsov estimate \eqref{eq:kuznetsovestimate}, we produce
	\begin{equation}	\label{eq:tvintegratedboundsinserted}
	\begin{split}
	\|u(T)-v^{\Dx}(T)\|_{L^1(\R)} &\leq C\Dx^\alpha + C\Dx^{\alpha-1}\big(\epsilon + \epsilon_0\|f\|_{\Lip} + \max(\epsilon_0,\Dt)\big) \\
	&\quad +  C\left(\frac{C_F\Dx}{\epsilon} + \frac{\|f\|_{\Lip}\Dt}{\epsilon_0}\right) \left(\Dx^\alpha + \frac{1}{\beta}\log\big(1+\|u_0\|_{C^\alpha}\beta T\Dx^{\alpha-1}\big)\right).
	\end{split}
	\end{equation}
	Defining
	\[
	Q(\Dx,\alpha)=\Dx^\alpha + \frac{1}{\beta}\log\big(1+\|u_0\|_{C^\alpha}\beta T\Dx^{\alpha-1}\big)
	\]
	and setting $\epsilon = \epsilon_0 = \Dx^{1-\alpha/2}\sqrt{Q(\Dx,\alpha)}$ yields
	\begin{equation}
	\begin{split}
	\|u(T)-v^{\Dx}(T)\|_{L^1(\R)} &\leq C\Dx^\alpha + C_{L,M,f}\sqrt{Q(\Dx,\alpha)}\Dx^{\alpha/2}
	\end{split}
	\end{equation}
	Since $\alpha\in(0,1)$, the second term on the right-hand side dominates, so we obtain \eqref{eq:optimalconvrate}. To get \eqref{eq:optimalasymptoticconvrate} we estimate
	\[
	\log\big(1+C\Dx^{\alpha-1}\big) \lesssim \log(C)+\log(\Dx^{\alpha-1}) \lesssim -(1-\alpha)\log(\Dx).
	\]
\end{proof}


\section{Numerical examples}
We consider three scalar conservation laws: Burgers' equation where $f(u)=u^2/2$, a cubic conservation law where $f(u)=u^3/3$, and lastly a linear conservation law where $f(u)=u$. The initial data will be given as fractional Brownian motion with varying Hurst exponent $H$. Introduced by Mandelbrot et al.~\cite{mandelbrot_fractional}, fractional Brownian motion can be seen as a generalization of standard Brownian motion with a scaling exponent different than $1/2$. We set
\[
u^H_0(\omega; x)\coloneqq B^H(\omega; x)\qquad \omega\in\Omega,\ x\in [0,1],
\]
where $B^H$ is fractional Brownian motion with Hurst exponent $H\in(0,1)$. Brownian motion corresponds to a Hurst exponent of $H=1/2$. 

To generate fractional Brownian motion, we use the random midpoint displacement method originally introduced by L\'evy~\cite{Levy1965} for Brownian motion, and later adapted for fractional Brownian motion~\cite{Fournier:1982:CRS:358523.358553,Voss1991}. Consider a uniform partition $0=x_{\hf}<\dots<x_{N+\hf}=1$ with $x_\iphf - x_\imhf \equiv \Dx$, where $N=2^k+1$ is the number of cells for some $k\in\N$. We first fix the endpoints
\[u^{H, \Delta x}_{1}(\omega; 0) =0\qquad u^{H, \Delta x}_{N}(\omega;0)=X_0(\omega), \]
where $(X_k)_{k\in\N}$ is a collection of normally distributed random variables with mean 0 and variance 1. Recursively, we set
\[u^{H, \Delta x}_{2^{k-l-1}(2j+1)}(\omega; 0)=\frac{1}{2}\left (u^{H, \Delta x}_{2^{k-l}(j+1)}(\omega; 0)+u^{H, \Delta x}_{2^{k-l}j}(\omega; 0)\right)+\sqrt{\frac{1-2^{2H-2}}{2^{2lH}}}X_{2^l+j}(\omega)\]
for $l=0,\ldots,k$ and for $j=0,\ldots,2^l$. That is, we bisect every interval and set the middle value to the average of the neighboring values plus some Gaussian random variable. See \Cref{fig:fractional_sample_burgers} (left column) for an example with $H=0.125$, $H=0.5$ and $H=0.75$. The initial data is normalized to have values in $[-1,1]$.

\begin{figure}
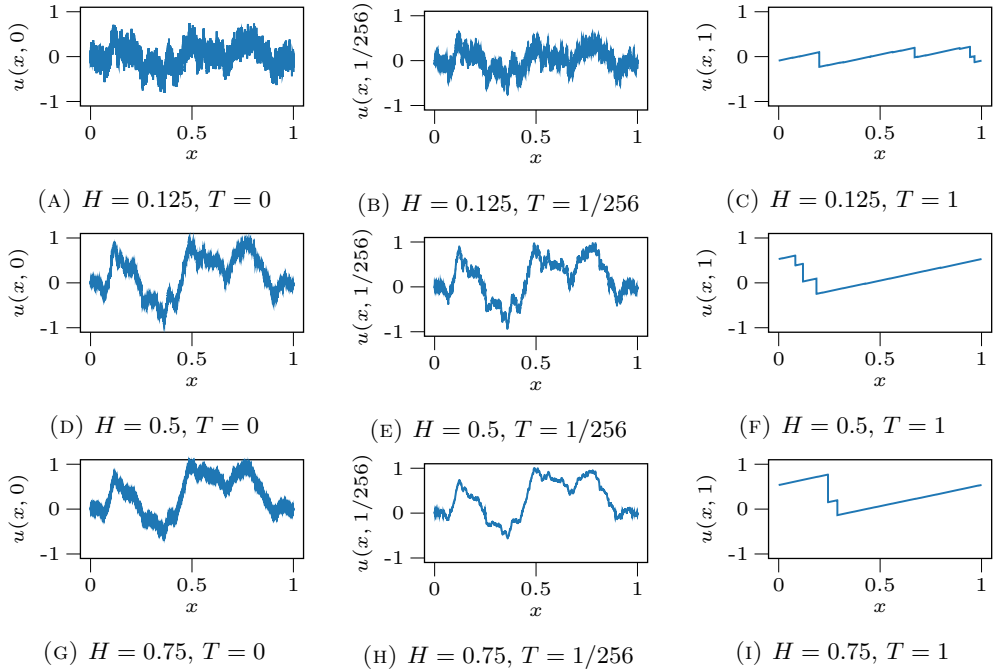

	\begin{subfigure}{0.3\textwidth}
		\InputImage{0.9\textwidth}{0.6\textwidth}{fbm_initial_burgers_0_125_rusanov}
		\subcaption{$H=0.125,$ $T=0$}
	\end{subfigure}
	\begin{subfigure}{0.3\textwidth}
		\InputImage{0.9\textwidth}{0.6\textwidth}{fbm_evolved_burgers_rusanov_0_00390625_0_125}		
		\subcaption{$H=0.125,$ $T=1/256$}
	\end{subfigure}
	\begin{subfigure}{0.3\textwidth}
		\InputImage{0.9\textwidth}{0.6\textwidth}{fbm_evolved_burgers_rusanov_1_0_125}
		\subcaption{$H=0.125,$ $T=1$}
	\end{subfigure}

	\begin{subfigure}{0.3\textwidth}
	\InputImage{0.9\textwidth}{0.6\textwidth}{fbm_initial_burgers_0_5_rusanov}
	\subcaption{$H=0.5,$ $T=0$}
	\end{subfigure}
	\begin{subfigure}{0.3\textwidth}
		\InputImage{0.9\textwidth}{0.6\textwidth}{fbm_evolved_burgers_rusanov_0_00390625_0_5}
		\subcaption{$H=0.5,$ $T=1/256$}
	\end{subfigure}
	\begin{subfigure}{0.3\textwidth}
		\InputImage{0.9\textwidth}{0.6\textwidth}{fbm_evolved_burgers_rusanov_1_0_5}
		\subcaption{$H=0.5,$ $T=1$}
	\end{subfigure}

	\begin{subfigure}{0.3\textwidth}
		\InputImage{0.9\textwidth}{0.6\textwidth}{fbm_initial_burgers_0_75_rusanov}
		\subcaption{$H=0.75,$ $T=0$}
	\end{subfigure}
	\begin{subfigure}{0.3\textwidth}
		\InputImage{0.9\textwidth}{0.6\textwidth}{fbm_evolved_burgers_rusanov_0_00390625_0_75}
		\subcaption{$H=0.75,$ $T=1/256$}
	\end{subfigure}
	\begin{subfigure}{0.3\textwidth}
		\InputImage{0.9\textwidth}{0.6\textwidth}{fbm_evolved_burgers_rusanov_1_0_75}
		\subcaption{$H=0.75,$ $T=1$}
	\end{subfigure}

	\caption{Example evolution of fractional Brownian motion under Burgers' equation with the Rusanov flux. \label{fig:fractional_sample_burgers}}
\end{figure}

\subsection{Numerical results}
\Cref{fig:comparison0125,fig:comparison05} show the computed solutions for mesh resolutions of $\Dx=2^{-8}$ and $\Dx=2^{-15}$, and as expected the approximation converges upon mesh refinement. In order to measure the rate of convergence we compare with a reference solution computed on a mesh of $2^{16}$ cells ($\Dx = 2^{-16}$). In order to ensure that the results are representative of a (fractional) Brownian motion, we repeat the experiment for 512 different initial data samples and average the corresponding convergence rates; the results are shown in \Cref{fig:avg_convergence}. \Cref{fig:avg_convergence} clearly show convergence for all the given configurations. However, for most configurations -- most notably those of low Hurst index $H$ -- we observe better convergence rates than those predicted by \Cref{thm:monotone_theorem,thm:maintheorem}.

\subsection{Sharpness of our estimates}
We measure the growth of $\TV(u^\Dx_0)$ as a function of $\Dx$ in \Cref{fig:tv_scaling_dx}. The figure agrees with \eqref{eq:tvprojectionerror} in Lemma \ref{lem:approximation_holder}, and shows that initial data with Hurst index $H$ has a blow up in total variation which scales as $\Dx^{H-1}$. We furthermore measure the scaling of the discrete Lipschitz norm $\DLipNorm{v^0}$ as a function of $\Dx$ in \Cref{fig:lipschitz_scaling_dx}. From the figure it is clear that our estimate of the initial $\text{Lip}^+$ norm has the correct scaling.

In \Cref{fig:tv_scaling_time} we show the evaluation of the inverse of the total variation as a function of time. As we can see from the plot, the total variation decays as $C/t$, which agrees well with the estimate on $\TV(v^n)$ in the proof of \Cref{lem:lipplus}. 

Inspired by \eqref{eq:tv_not_so_sharp}, we use the value of $\beta=\frac{1}{2}\cdot\frac{1}{4}$ for the Godunov flux applied to the Burgers' equation, computed in~\cite{NT92}, and measure the sharpness of the bound~\eqref{eq:tv_not_so_sharp} by the ratio
\[\frac{2M\left(\DLipNorm{v^0}\Dt + \frac{1}{\beta}\log \big(1+\beta t^N\DLipNorm{v^0}\big)\right)}{\sum_{n=0}^N \TV(v^n)\Dt},\]
which is plotted in~\Cref{fig:lip_coefficient_ratio} as a function of the spatial resolution $\Dx$. As we can see, the bound is not perfectly sharp, and seems to overestimate the sum on the left hand side of~\eqref{eq:tv_not_so_sharp} by a factor of $\Delta^{\frac{1}{4}}$. This  partially explains the discrepancy of the observed convergence rates and the predicted convergence rate of~\Cref{thm:maintheorem}.

\subsection{Reproducing the numerical experiments}
All numerical experiments have been done with the Alsvinn code available from \url{https://alsvinn.github.io}. The code for post-processing, along with the generation of the initial data, can be found at \url{https://github.com/kjetil-lye/unbounded_tv_experiments}. They are also permanently stored on the Zenodo platform with the DOI  \texttt{10.5281/zenodo.4088164}.

\section{Conclusion}
In this paper we have extended the standard Kuznetsov convergence proof for finite volume schemes approximating solutions to hyperbolic conservation laws to include initial data with unbounded total variation. The theory covers rapidly oscillating data such as Brownian and fractional Brownian motion. We show several numerical experiments which show good agreement with the theory.

Our result can easily be extended to cover initial data which is only piecewise H\"older continuous with a finite number of downward jump discontinuities. The suboptimal rate $\Dx^{\alpha-1/2}$ in Theorem \ref{thm:monotone_theorem} can be extended even further to cover e.g.~initial data in Besov spaces, since it only relies on the projection estimates \eqref{eq:l1projectionerror} and \eqref{eq:tvprojectionerror}.

We conjecture that the rate $\Dx^{\alpha/2}$ given in \eqref{eq:optimalasymptoticconvrate} in Theorem \ref{thm:maintheorem} is optimal or near optimal. For optimality of convergence rates see e.g.~\cite{Sab97,RuSaSo19}.

\section*{Acknowledgements}
USF was partially funded by the Research Council of Norway project number 301538.

%
%

\begin{figure}
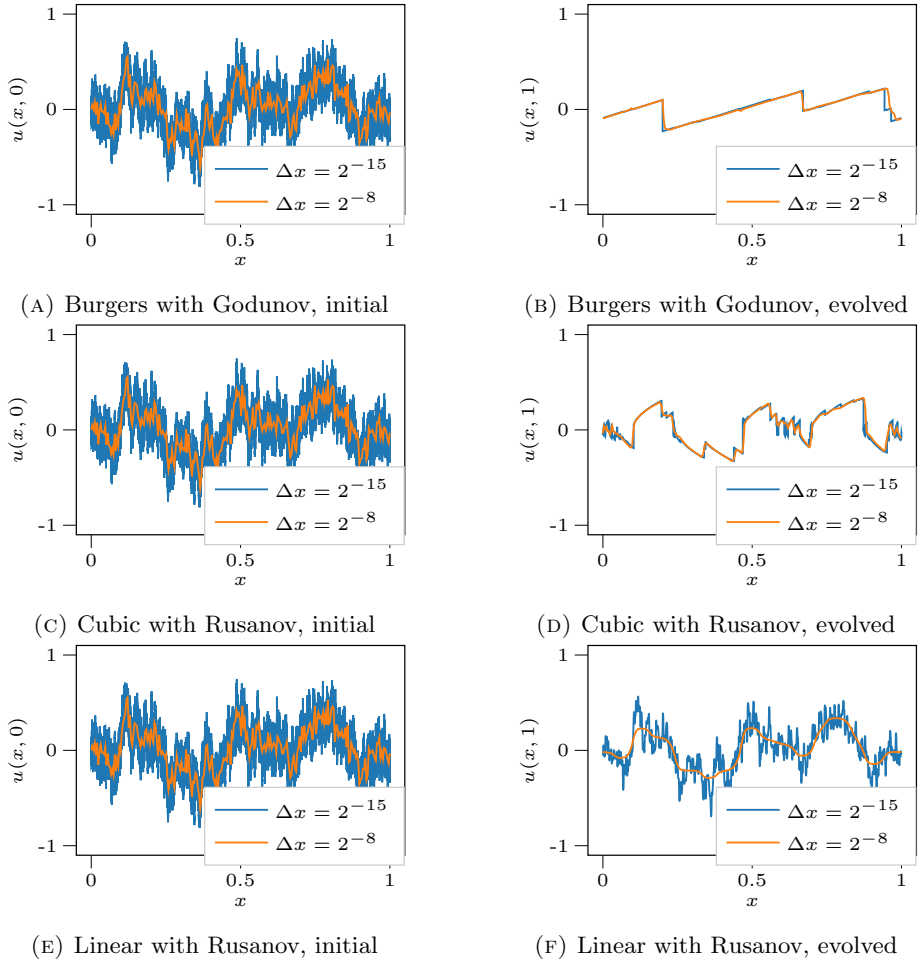

	\begin{subfigure}{0.45\textwidth}
		\InputImage{0.8\textwidth}{0.6\textwidth}{fbm_initial_compare_burgers_0_125_rusanov}
		\subcaption{Burgers with Godunov, initial}
	\end{subfigure}
	\begin{subfigure}{0.45\textwidth}
		\InputImage{0.8\textwidth}{0.6\textwidth}{fbm_evolved_compare_burgers_godunov_1_0_125}
		\subcaption{Burgers with Godunov, evolved}
	\end{subfigure}

	\begin{subfigure}{0.45\textwidth}
		\InputImage{0.8\textwidth}{0.6\textwidth}{fbm_initial_compare_cubic_0_125_rusanov}
		\subcaption{Cubic with Rusanov, initial}
	\end{subfigure}
	\begin{subfigure}{0.45\textwidth}
		\InputImage{0.8\textwidth}{0.6\textwidth}{fbm_evolved_compare_cubic_rusanov_1_0_125}
		\subcaption{Cubic with Rusanov, evolved}
	\end{subfigure}

	\begin{subfigure}{0.45\textwidth}
	\InputImage{0.8\textwidth}{0.6\textwidth}{fbm_initial_compare_linear_0_125_rusanov}
	\subcaption{Linear with Rusanov, initial}
\end{subfigure}
\begin{subfigure}{0.45\textwidth}
	\InputImage{0.8\textwidth}{0.6\textwidth}{fbm_evolved_compare_linear_rusanov_1_0_125}
	\subcaption{Linear with Rusanov, evolved}
\end{subfigure}
\caption{\label{fig:comparison0125}Comparison between different mesh resolutions ($\Dx=2^{-15}$ and $\Dx=2^{-8}$) at $T=0$ and $T=1$ for different equations and schemes. Here $H=0.125$.}
\end{figure}

\begin{figure}
	\begin{subfigure}{0.45\textwidth}
		\InputImage{0.8\textwidth}{0.6\textwidth}{fbm_initial_compare_burgers_0_5_rusanov}
		\subcaption{Burgers with Godunov, initial}
	\end{subfigure}
	\begin{subfigure}{0.45\textwidth}
		\InputImage{0.8\textwidth}{0.6\textwidth}{fbm_evolved_compare_burgers_godunov_1_0_5}
		\subcaption{Burgers with Godunov, evolved}
	\end{subfigure}

	\begin{subfigure}{0.45\textwidth}
		\InputImage{0.8\textwidth}{0.6\textwidth}{fbm_initial_compare_cubic_0_5_rusanov}
		\subcaption{Cubic with Rusanov, initial}
	\end{subfigure}
	\begin{subfigure}{0.45\textwidth}
		\InputImage{0.8\textwidth}{0.6\textwidth}{fbm_evolved_compare_cubic_rusanov_1_0_5}
		\subcaption{Cubic with Rusanov, evolved}
	\end{subfigure}
	
	\begin{subfigure}{0.45\textwidth}
		\InputImage{0.8\textwidth}{0.6\textwidth}{fbm_initial_compare_linear_0_5_rusanov}
		\subcaption{Linear with Rusanov, initial}
	\end{subfigure}
	\begin{subfigure}{0.45\textwidth}
		\InputImage{0.8\textwidth}{0.6\textwidth}{fbm_evolved_compare_linear_rusanov_1_0_5}
		\subcaption{Linear with Rusanov, evolved}
	\end{subfigure}
	\caption{\label{fig:comparison05}Comparison between different mesh resolutions ($\Dx=2^{-15}$ and $\Dx=2^{-8}$) at $T=0$ and $T=1$ for different equations and schemes. Here $H=0.5$.}
\end{figure}

\begin{figure}
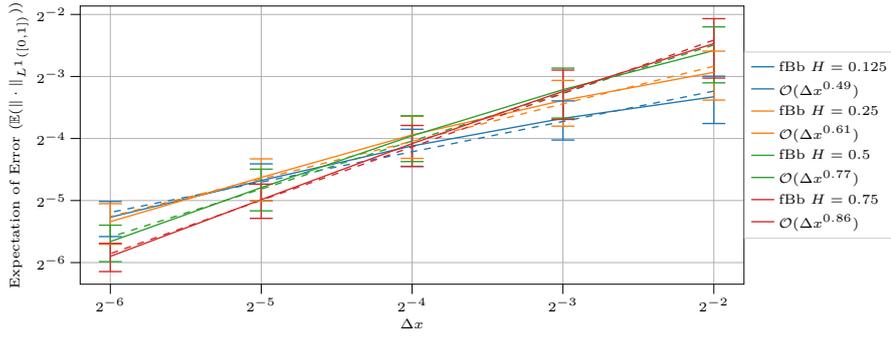
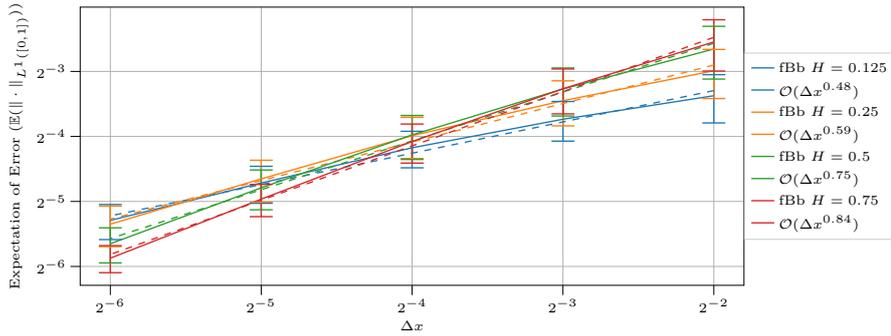
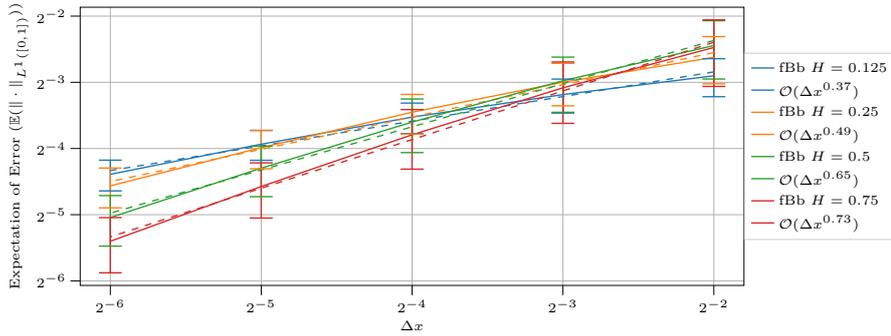
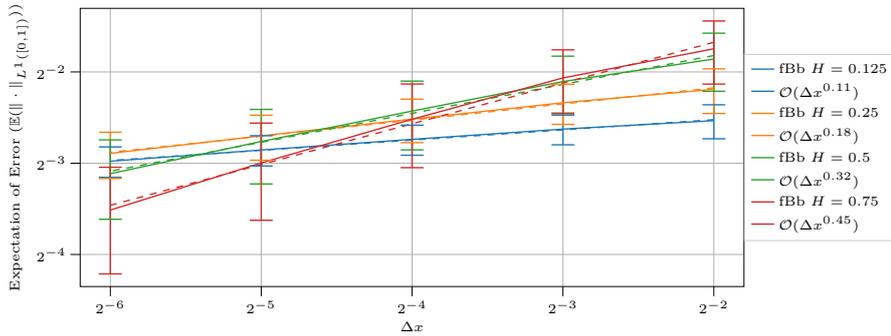

	\begin{subfigure}{\textwidth}
		\InputImage{0.8\textwidth}{0.4\textwidth}{fbm_avg_convergence_dx_burgers_godunov_1}
		\subcaption{Burgers with Godunov}
	\end{subfigure}

	\begin{subfigure}{\textwidth}
		\InputImage{0.8\textwidth}{0.4\textwidth}{fbm_avg_convergence_dx_burgers_rusanov_1}
		\subcaption{Burgers with Rusanov}
	\end{subfigure}
	
	\begin{subfigure}{\textwidth}
		\InputImage{0.8\textwidth}{0.4\textwidth}{fbm_avg_convergence_dx_cubic_rusanov_1}
		\subcaption{Cubic with Rusanov}
	\end{subfigure}

	\begin{subfigure}{\textwidth}
		\InputImage{0.8\textwidth}{0.4\textwidth}{fbm_avg_convergence_dx_linear_rusanov_1}
		\subcaption{Linear with Rusanov}
	\end{subfigure}
	
	\caption{Convergence rates for different equations and numerical fluxes at $T=1$ taken as an average over $512$ samples.\label{fig:avg_convergence}}
\end{figure}

\begin{figure}
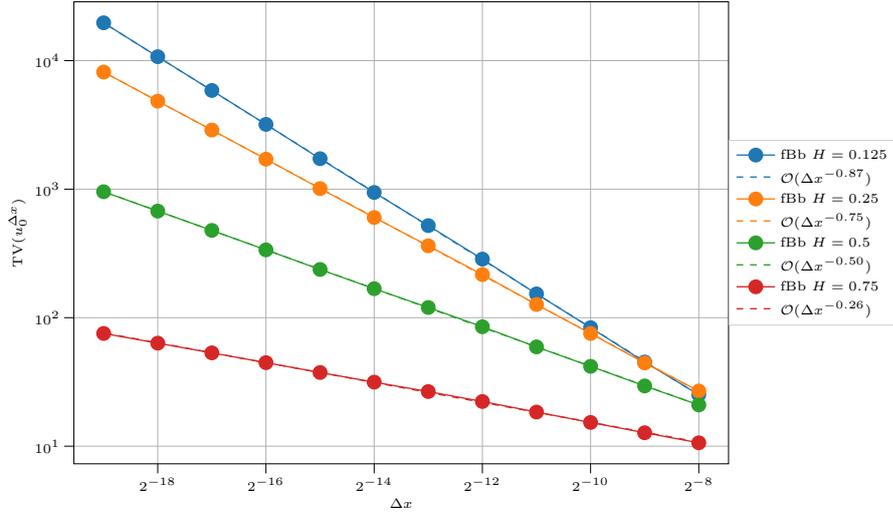

	\InputImage{0.8\textwidth}{0.6\textwidth}{fbm_tv_scaling}
	\caption{Total variation of initial data as a function of mesh width $\Dx$. (The stippled lines are invisible because they match the solid curves perfectly.) \label{fig:tv_scaling_dx}}
\end{figure}

\begin{figure}
	\InputImage{0.8\textwidth}{0.6\textwidth}{fbm_lipschitz_scaling}
	\caption{$\DLipNorm{v^0}$ as a function of mesh width $\Dx$. \label{fig:lipschitz_scaling_dx}}
\end{figure}

\begin{figure}
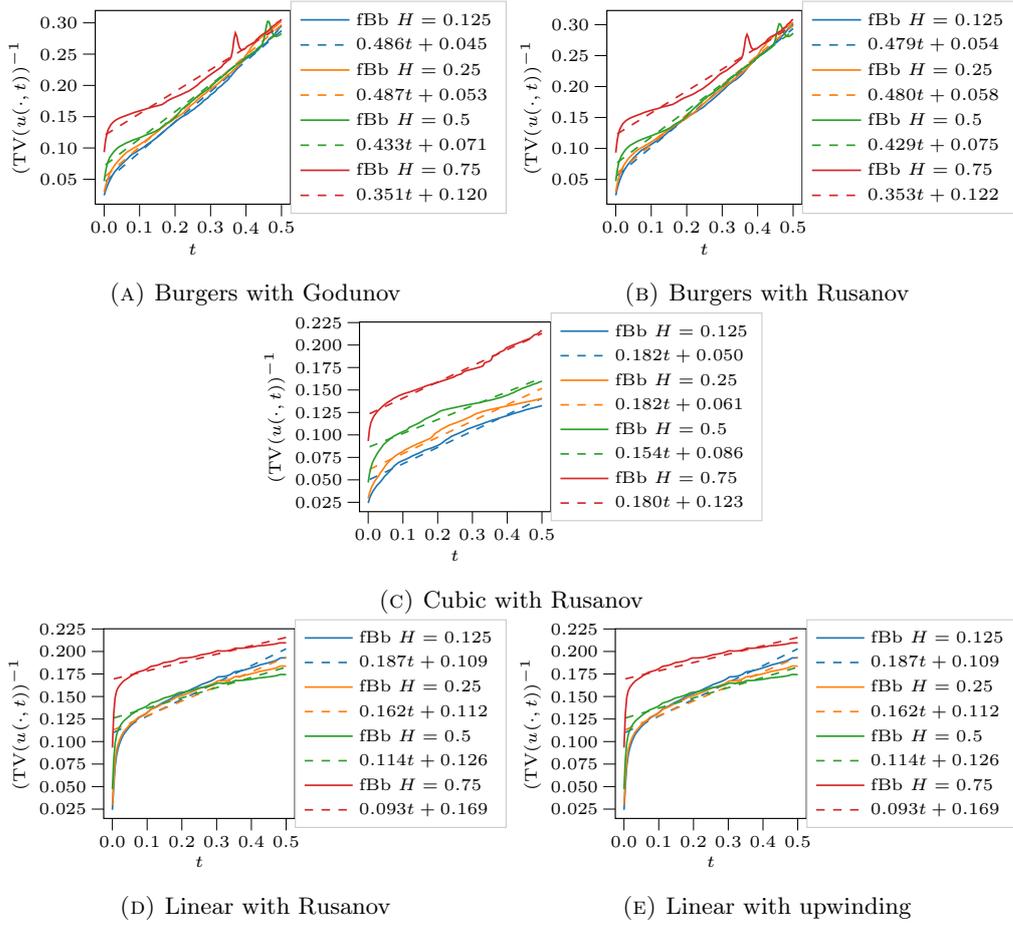

	\begin{subfigure}{0.45\textwidth}
		\InputImage{\textwidth}{0.6\textwidth}{fbm_tv_burgers_godunov_0_5}
		\subcaption{Burgers with Godunov}
	\end{subfigure}
	\begin{subfigure}{0.45\textwidth}
		\InputImage{\textwidth}{0.6\textwidth}{fbm_tv_burgers_rusanov_0_5}
		\subcaption{Burgers with Rusanov}
	\end{subfigure}

	\begin{subfigure}{0.45\textwidth}
		\InputImage{\textwidth}{0.6\textwidth}{fbm_tv_cubic_rusanov_0_5}
		\subcaption{Cubic with Rusanov}
	\end{subfigure} \\
	
	\begin{subfigure}{0.45\textwidth}
		\InputImage{\textwidth}{0.6\textwidth}{fbm_tv_linear_rusanov_0_5}
		\subcaption{Linear with Rusanov}
	\end{subfigure}
	\begin{subfigure}{0.45\textwidth}
		\InputImage{\textwidth}{0.6\textwidth}{fbm_tv_linear_upwind_0_5}
		\subcaption{Linear with upwinding}
	\end{subfigure}
	\caption{The total variation as a function of time for varying equations, numerical fluxes and Hurst indices. \label{fig:tv_scaling_time}}
\end{figure}

\begin{figure}

	\InputImage{0.9\textwidth}{0.6\textwidth}{fbm_tv_lip_scaling_deviation_final_burgers_godunov_0_01}
	

	\caption{\label{fig:lip_coefficient_ratio}Sharpness of the bound \eqref{eq:tv_not_so_sharp} for Burgers' equation solved with the Godunov scheme. Specifically, we plot the ratio $\frac{2M\left(\DLipNorm{v^0}\Dt + \frac{1}{\beta}\log \big(1+\beta t^N\DLipNorm{v^0}\big)\right)}{\sum_{n=0}^N \TV(v^n)\Dt}$ as a function of the mesh width $\Dx$.}
\end{figure}
\clearpage
\bibliographystyle{plain}
\bibliography{biblio}

\begin{thebibliography}{10}

\bibitem{fris1}
J.~Bec and K.~Khanin.
\newblock Burgers turbulence.
\newblock {\em Phys. Rep.}, 447:1--66, 2007.

\bibitem{Coclite2014}
G.~M. Coclite, S.~Mishra, N.~H. Risebro, and F.~Weber.
\newblock Analysis and numerical approximation of brinkman regularization of
  two-phase flows in porous media.
\newblock {\em Computational Geosciences}, 18(5):637--659, April 2014.

\bibitem{CM80}
M.~G. Crandall and A.~Majda.
\newblock {Monotone difference approximations for scalar conservation laws}.
\newblock {\em Math. Comp.}, 34:1–21, 1980.

\bibitem{Sab97}
F.~\c{S}abac.
\newblock {The Optimal Convergence Rate of Monotone Finite Difference Methods
  for Hyperbolic Conservation Laws}.
\newblock {\em SIAM Journal on Numerical Analysis}, 34(6):2306–2318, 1997.

\bibitem{Dafermos}
C.~M. Dafermos.
\newblock {\em Hyperbolic conservation laws in continuum physics}, volume 325
  of {\em Fundamental Principles of Mathematical Sciences}.
\newblock Springer-Verlag, Berlin, fourth edition, 2016.

\bibitem{FjoSol16}
U.~S. Fjordholm and S.~Solem.
\newblock {Second-order convergence of monotone schemes for conservation laws}.
\newblock {\em SIAM J. Numer. Anal.}, 54(3):1920–1945, 2016.

\bibitem{Fournier:1982:CRS:358523.358553}
A.~Fournier, D.~Fussell, and L.~Carpenter.
\newblock Computer rendering of stochastic models.
\newblock {\em Commun. ACM}, 25(6):371--384, June 1982.

\bibitem{front_tracking_risebro}
Helge Holden and Nils~Henrik Risebro.
\newblock {\em Front tracking for hyperbolic conservation laws}, volume 152 of
  {\em Applied Mathematical Sciences}.
\newblock Springer, Heidelberg, second edition, 2015.

\bibitem{Krotkiewski2011}
Marcin Krotkiewski, Ingeborg~S. Ligaarden, Knut-Andreas Lie, and Daniel~W.
  Schmid.
\newblock On the importance of the stokes-brinkman equations for computing
  effective permeability in karst reservoirs.
\newblock {\em Communications in Computational Physics}, 10(5):1315--1332,
  November 2011.

\bibitem{Kru70}
S.~N. Kruzkov.
\newblock {First order quasilinear equations in several independent variables}.
\newblock {\em Math USSR SB}, 10(2):217–243, 1970.

\bibitem{KUZNETSOV1976105}
N.N. Kuznetsov.
\newblock Accuracy of some approximate methods for computing the weak solutions
  of a first-order quasi-linear equation.
\newblock {\em USSR Computational Mathematics and Mathematical Physics},
  16(6):105--119, 1976.

\bibitem{leveque_green}
Randall~J. LeVeque.
\newblock {\em Numerical methods for conservation laws}.
\newblock Lectures in Mathematics ETH Z{\"u}rich. Birkh{\"a}user Verlag, Basel,
  second edition, 1992.

\bibitem{Levy1965}
P.~L{\'e}vy.
\newblock {\em Processus stochastiques et mouvement brownien}.
\newblock Les Grands Classiques Gauthier-Villars. {\'E}ditions Jacques Gabay,
  Sceaux, 1992.

\bibitem{mandelbrot_fractional}
Benoit~B. Mandelbrot and John~W. {Van Ness}.
\newblock Fractional {B}rownian motions, fractional noises and applications.
\newblock {\em SIAM Rev.}, 10:422--437, 1968.

\bibitem{NT92}
H.~Nessyahu and E.~Tadmor.
\newblock {The Convergence Rate of Approximate Solutions for Nonlinear Scalar
  Conservation Laws}.
\newblock {\em SIAM Journal on Numerical Analysis}, 29(6):1505–1519, 1992.

\bibitem{Ole57}
O.~A. Oleinik.
\newblock {On discontinuous solutions of non-linear differential equations}.
\newblock {\em Uspekhi Mat. Nauk.}, 12:3–73, 1957.
\newblock English translation, Amer. Math. Soc. Trans., ser. 2, no.26, pp.
  95-192.

\bibitem{RuSaSo19}
A.~M. Ruf, E.~Sande, and S.~Solem.
\newblock {The optimal convergence rate of monotone schemes for conservation
  laws in the Wasserstein distance}.
\newblock {\em Journal of Scientific Computing}, 80(3):1764–1776, 2019.

\bibitem{sinai}
Ya.~G. Sinai.
\newblock Statistics of shocks in solutions of inviscid burgers equation.
\newblock {\em Communications in Mathematical Physics}, 148(3):601--621, Sep
  1992.

\bibitem{Voss1991}
R.~F. Voss.
\newblock {\em Random Fractal Forgeries}, pages 805--835.
\newblock Springer Berlin Heidelberg, Berlin, Heidelberg, 1991.

\end{thebibliography}
\end{document}